\documentclass[12pt,reqno]{amsart}
\usepackage{amsmath,amssymb,amsfonts,amsthm}
\usepackage[left=3cm, right=3cm, top=2.8cm, bottom=2.8cm]{geometry}
\usepackage[utf8]{inputenc}
\usepackage[T1]{fontenc}
\usepackage{color}
\usepackage{multirow}% http://ctan.org/pkg/multirow
\usepackage{graphicx}% http://ctan.org/pkg/graphicx
\usepackage{hyperref}
\usepackage{graphicx}% needed for \resizebox
\usepackage{calc}%
\usepackage{hyperref}
\hypersetup{
    colorlinks=true,
    linkcolor=blue,
    filecolor=magenta,
    urlcolor=cyan,
}

\linespread{1.3}
%%%%%%%%%%%%%%%%%%%%%%%%%%%%%%%%%%%%%%%%%%%%%%%%%%%%%%%%%%%%%%%%%%%%%%%%%%%%%%%%%

\usepackage{stackrel} % Para verificar equações citadas ou não. Retirar quando o artigo estiver pronto

%%%%%%%%%%%%%%%%%%%%%%%%%%%%%%%%%%%%%%%%%%%%%%%%%%%%%%%%%%%%%%%%%%%%%%%%%%%%%%%%%

\newcommand{\capto}{{}^C \! D_t}
\newcommand{\capt}{{}^C \! D^{\alpha}_t}

%%%%%%%%%%%%%%%%%%%%%%%%%%%%%%%%%%%%%%%%%%%%%%%%%%%%%%%%%%%%%%%%%%%%%%%%%%%%%%%%%

\DeclareMathOperator*{\esssup}{ess\,sup}
\newtheorem{theorem}{Theorem}
\newtheorem{lemma}[theorem]{Lemma}
\newtheorem{proposition}[theorem]{Proposition}

\newtheorem{definition}[theorem]{Definition}
\newtheorem{remark}[theorem]{Remark}

\theoremstyle{plain}

\linespread{1.3}

%%%%%%%%%%%%%%%%%%%%%%%%%%%%%%%%%%%%%%%%%%%%%%%%%%%%%%%%%%%%%%%%%%%%%%%%%%%%%%%%%

\begin{document}

\title[A General Version of Carath\'{e}odory's Theorem]{A General Version of Carath\'{e}odory's Existence and Uniqueness Theorem}

%%%%%%%%%%%%%%%%%%%%%%%%%%%

\author[P.M. Carvalho-Neto]{Paulo M. de Carvalho-Neto}
\address[Paulo M. de Carvalho Neto]{Department of Mathematics, Federal University of Santa Catarina, Florian\'{o}polis - SC, Brazil}
\email[]{paulo.carvalho@ufsc.br}

\author[C.L. Frota]{C\'{i}cero L. Frota}
\address[C\'{i}cero L. Frota]{Department of Mathematics, State University of Maringá, Maring\'{a} - PR, Brazil\vspace*{0.3cm}}
\email[]{clfrota@uem.br}

\author[P.G.P. Torelli]{Pedro G. P. Torelli}
\address[Pedro G. P. Torelli]{Department of Mathematics, State University of Maringá, Maring\'{a} - PR, Brazil\vspace*{0.3cm}}
\email[]{pgptorelli@gmail.com}

%%%%%%%%%%%%%%%%%%%%%%%%%%%

\subjclass[2020]{26A33, 34A08, 34A12}

%%%%%%%%%%%%%%%%%%%%%%%%%%%

\keywords{existence and uniqueness, Caputo fractional derivative, Carath\'{e}odory, Nemytskii}

%%%%%%%%%%%%%%%%%%%%%%%%%%%

\begin{abstract}
%%% Text of abstract
In this paper, we establish a general version of Carath\'{e}odory's existence and uniqueness theorem for a semilinear system of integro-differential equations arising from differential equations with distinct orders of Caputo fractional derivative. The main result of our work demonstrates that the integrability order of the Carath\'{e}odory function $f$ must be at least greater than the maximum of the reciprocals of all differentiation orders in the system; otherwise, even the existence of a solution cannot be guaranteed.
%%%
\end{abstract}

\maketitle

\section{Introduction and Some Preliminars}

Given $T>0$ and $\xi\in\mathbb{R}^n$, $n \geq 1$, we consider the classical Cauchy problem
\begin{equation}\label{origedo}
\left\{\begin{array}{ll}
u'(t) = f(u(t),t), & t \in (0, T], \\
u(0) = \xi,
\end{array}\right.
\end{equation}
where $f: \mathbb{R}^n \times[0,T] \subset \mathbb{R}^{n+1} \to \mathbb{R}^n$ is a continuous function. The existence and uniqueness of solution to \eqref{origedo} is a fundamental topic in the theory of differential equations, playing a crucial role in both theoretical and applied contexts. Despite the extensive literature on this subject (see \cite{AgLa1, Li1, Os1, Pi1, To1} for classical references), it remains far from being fully understood.

In the scalar and autonomous case, the existence and uniqueness of solutions to \eqref{origedo} can, in some sense, be fully characterized by necessary and sufficient conditions, as demonstrated by Biding in \cite{Bi1}. 

For the scalar and nonautonomous case, a general condition was established by Levy \cite{Le1}, who proved that if there exists a differentiable function $u : (0,T] \to (0,\infty)$ with $u'(t) > 0$ for all $t \in (0,T]$ such that the following Lipschitz-type inequality holds:
\begin{equation*}
|f(x,t) - f(y,t)| \leq \left(\frac{u'(t)}{u(t)}\right) |x - y|, \quad \forall t \in (0,T], \quad \forall \, x, y \in \Omega,
\end{equation*}
then the necessary and sufficient condition for uniqueness of solutions to \eqref{origedo} is given by $\lim_{t\to 0}(u(t)/t) \not= 0$.

While the results of Biding and Levy apply to a broad class of functions, it is typically assumed that $f$ is continuous, which ensures that the derivative of the unique solution to \eqref{origedo} is also continuous. However, when $f$ is merely measurable, the analysis of existence and uniqueness becomes significantly more challenging. 

Despite these challenges, solutions can still be rigorously formulated under weaker assumptions on $f$. In 1918, in his celebrated work \cite{BiBi1}, Carathéodory introduced a broader class of functions, enabling the definition of solutions even when $f$ lacks continuity. His study laid the foundation for further research that extended classical existence and uniqueness results. It remains widely used and can be stated as follows (cf. \cite[Section 1.2]{AmPr1} and \cite[Section 1.3]{Ro1}):
\begin{definition} We say that $f:\mathbb{R}^n \times[0,T] \subset \mathbb{R}^{n+1} \to \mathbb{R}^n$ is a Carathéodory function, if it satisfies:
\begin{itemize}
\item[(i)] $x\mapsto f(x,t)$ is continuous for almost every $t\in[0,T]$;\vspace*{0.2cm}
\item[(ii)] $t\mapsto f(x,t)$ is Lebesgue measurable for every $x\in\Omega$.
\end{itemize}
\end{definition}

It is classical, however, that the condition of $f$ being a Carath\'{e}odory function is not enough to ensure uniqueness of solutions to \eqref{origedo}, as can be seen by considering $f:[0,1]\times[0,1]\rightarrow\mathbb{R}$ given by
$$f(x,t)=\left\{\begin{array}{ll}2\sqrt{x},&\textrm{ for }t\in[0,1/2],\vspace*{0.1cm}\\\sqrt{x},&\textrm{ for }t\in(1/2,1].\end{array}\right.$$
Then the absolutely continuous functions $\phi_1, \phi_2 : [0,1] \rightarrow \mathbb{R}$ given by
$$\phi_1(t)=\left\{\begin{array}{ll}t^2,&\textrm{ for }t\in[0,1/2],\vspace*{0.1cm}\\t^2/4,&\textrm{ for }t\in(1/2,1],\end{array}\right.$$
and $\phi_2(t) = 0$, are two distinct absolutely continuous solutions of \eqref{origedo}, when $\xi=0$.

To address this issue, we introduce a new condition, which we state below along with the result that ensures the existence and uniqueness of a solution within this more general framework. For a proof of this result, see \cite[Theorem 5.3]{Ha1} (cf. \cite[Theorem 1.45]{Ro1}).

\begin{theorem}\label{classiccaratheodory} Assume that $f:\mathbb{R}^n\times[0,T] \subset \mathbb{R}^{n+1} \rightarrow \mathbb{R}^n$ is a Carathéodory function, satisfying:
\begin{itemize}
\item[($C_1$)] There exist $\gamma \in L^1(0,T)$ and $C>0$ such that
\begin{equation*}
\left\Vert f(x,t) \right\Vert_{\mathbb{R}^n} \leq C\left\Vert x \right\Vert_{\mathbb{R}^n} + \gamma(t),
\end{equation*}
for all $x\in\mathbb{R}^n$ and almost every $t\in[0,T]$;\vspace*{0.2cm}
\item[($C_2$)] There exists a function $l \in L^1(0,T)$ such that
$$\left\Vert f(x,t)-f(y,t)\right\Vert_{\mathbb{R}^n}\leq l(t) \left\Vert x-y \right\Vert_{\mathbb{R}^n},$$
for all $x\in\mathbb{R}^n$ and almost every $t\in[0,T]$.
\end{itemize}
Then, for each $\xi \in \mathbb{R}^n$, there exists a unique absolutely continuous solution $\phi:[0,T]\rightarrow\mathbb{R}^n$ of \eqref{origedo}, i.e., $\phi'(t) = f(\phi(t),t)$ for almost every $t\in[0,T]$ and $\phi(0)=\xi$. 

\end{theorem}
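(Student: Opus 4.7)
The plan is to recast \eqref{origedo} as a fixed-point equation and apply the Banach contraction principle on $C([0,T];\mathbb{R}^n)$ endowed with a Bielecki-type weighted norm. By the fundamental theorem of calculus for absolutely continuous functions, an absolutely continuous $\phi:[0,T]\to\mathbb{R}^n$ solves \eqref{origedo} if and only if
\[
\phi(t) = \xi + \int_0^t f(\phi(s),s)\,ds, \quad t\in[0,T],
\]
so it suffices to produce a unique continuous fixed point of the operator
\[
(T\phi)(t) := \xi + \int_0^t f(\phi(s),s)\,ds.
\]

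First I would verify that $T$ is a well-defined self-map of $C([0,T];\mathbb{R}^n)$. For any $\phi\in C([0,T];\mathbb{R}^n)$, the measurability of $s\mapsto f(\phi(s),s)$ follows from the Carathéodory conditions by the standard argument: uniformly approximate $\phi$ by simple functions, apply item (ii) of the definition to each level value, and pass to the limit using item (i). Condition $(C_1)$ combined with the boundedness of $\phi$ on $[0,T]$ then gives $\|f(\phi(\cdot),\cdot)\|_{\mathbb{R}^n}\in L^{1}(0,T)$, and dominated convergence shows that $T\phi$ is continuous (in fact absolutely continuous) on $[0,T]$.

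Next, I would introduce the weighted norm
\[
\|\phi\|_w := \sup_{t\in[0,T]} e^{-w L(t)} \|\phi(t)\|_{\mathbb{R}^n}, \qquad L(t):=\int_0^t l(s)\,ds,
\]
which is equivalent to the usual sup norm and therefore renders $C([0,T];\mathbb{R}^n)$ a Banach space. Using $(C_2)$ together with the elementary identity $\int_0^t l(s)e^{wL(s)}\,ds = w^{-1}\bigl(e^{wL(t)}-1\bigr)$, a direct computation produces
\[
\|T\phi_1 - T\phi_2\|_w \leq \frac{1}{w}\,\|\phi_1-\phi_2\|_w,
\]
so any choice $w>1$ makes $T$ a strict contraction. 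Banach's fixed-point theorem then furnishes a unique $\phi\in C([0,T];\mathbb{R}^n)$ with $T\phi=\phi$; since $f(\phi(\cdot),\cdot)\in L^1(0,T;\mathbb{R}^n)$, the identity $\phi(t)=\xi+\int_0^{t}f(\phi(s),s)\,ds$ automatically upgrades $\phi$ to absolute continuity with $\phi'(t)=f(\phi(t),t)$ for a.e.\ $t\in[0,T]$ and $\phi(0)=\xi$.

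The step I expect to require the most care is the measurability of the composition $s\mapsto f(\phi(s),s)$: although folklore for Carathéodory maps, it is not immediate and needs the approximation-by-simple-functions argument indicated above (or, equivalently, an appeal to Scorza--Dragoni). Once that is settled, the proof reduces entirely to the Bielecki-norm contraction estimate, which elegantly handles existence and uniqueness in one stroke; uniqueness alone could alternatively be recovered via Grönwall's inequality applied to $\int_0^t l(s)\|\phi_1(s)-\phi_2(s)\|_{\mathbb{R}^n}\,ds$, but this is essentially the same computation in disguise.
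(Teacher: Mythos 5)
Your proof is correct and complete: the equivalence of \eqref{origedo} with the integral equation via the fundamental theorem of calculus for absolutely continuous functions, the measurability of $s\mapsto f(\phi(s),s)$ by simple-function approximation combined with conditions (i)--(ii) of the Carath\'eodory definition, the $L^1$ bound from $(C_1)$, and the Bielecki-norm contraction estimate are all sound. (Two small remarks: $(C_2)$ automatically forces $l(t)\geq 0$ for a.e.\ $t$, which is what legitimizes both the monotonicity of $L$ and the identity $\int_0^t l(s)e^{wL(s)}\,ds = w^{-1}\bigl(e^{wL(t)}-1\bigr)$, valid since $e^{wL}$ is absolutely continuous; and uniqueness among \emph{absolutely continuous} solutions follows because every such solution is a continuous fixed point of $T$, which you implicitly use.) For comparison: the paper does not actually prove Theorem \ref{classiccaratheodory}, quoting it instead from \cite[Theorem 5.3]{Ha1} and \cite[Theorem 1.45]{Ro1}, so the closest in-paper argument is the proof of the fractional generalization, Theorem \ref{202504011725}. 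That proof takes a genuinely different route: rather than switching to an equivalent weighted norm that makes the solution operator a contraction in one step, the authors iterate the operator and invoke Lemma \ref{lemaint} to show that some power $\mathcal{T}^{k_0}$ is a contraction in the ordinary sup norm. Your Bielecki device is the cleaner choice in the classical case because the kernel is constant, so the exponential weight integrates exactly; it does not transfer readily to the fractional setting, where the singular kernel $(t-s)^{\alpha_j-1}$ obstructs precisely the identity you exploit, and the iterated-power (Weissinger-type) estimate is the natural substitute. In short, your approach buys a one-step contraction with explicit constants for $\alpha=1$, while the paper's technique buys uniformity across all orders $\alpha_j\in(0,1]$ with $\ell\in L^p$.
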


Now we turn our attention to the fractional Cauchy problem given by
\begin{equation}\label{fracclassical}\left\{\begin{array}{ll}
\capt u(t) = f(u(t),t), & \text{ for a.e. } t \in [0, T], \\
u(0) = \xi,
\end{array}\right.\end{equation}
where $\alpha\in(0,1)$ and $\capt$ denotes the Caputo fractional derivative of order $\alpha$.

As in the case $\alpha = 1$, the fractional Cauchy problem \eqref{fracclassical} has been extensively studied in the literature when $f$ is a continuous function; see, for instance, \cite{KiSrTr1,SaKiMa1} for classical references on this subject. However, the case where $f$ is merely a Carathéodory function remains less explored. 

Recently, Lan and Webb \cite{We1} introduced a novel Bihari-type inequality for singular kernels, which arises in the integral formulation of \eqref{fracclassical}. Using this inequality, they proved the existence and uniqueness of a positive solution to \eqref{fracclassical}, assuming that $f$ is a Carathéodory function that, when composed with a continuous function belongs to $L^p$ and satisfies certain growth conditions. A key aspect of their results is the requirement that $\alpha > 1/p$, which is crucial for applying their techniques. This condition is closely linked to the behavior of the Riemann-Liouville fractional integral of order $1/p$ in $L^p$, which does not necessarily gives in a continuous function. This issue was originally analyzed by Hardy and Littlewood \cite{HaLi} and later revisited by Carvalho-Neto and Fehlberg Júnior \cite{CarFe2}. 

In contrast to Lan and Webb's approach, our work focuses on a more general system of differential equations. Specifically, we consider $\{\alpha_j\}_{j=1}^n \subset (0,1]$ and discuss the following system of fractional differential equations in $\mathbb{R}^n$:
\begin{equation}\label{caratheq}
\left\{
\begin{array}{cccl}
\capto^{\alpha_1} \varphi_1(t) &=& f_1\big(\varphi_1(t), \cdots , \varphi_n(t),t\big), & \text{for a.e. } t \in [0,T], \\
\vdots& & \vdots & \\
\capto^{\alpha_n} \varphi_n(t) &=& f_n\big(\varphi_1(t), \cdots , \varphi_n(t),t\big), & \text{for a.e. } t \in [0,T],
\end{array}
\right.
\end{equation}
where $\varphi = (\varphi_1 , \cdots , \varphi_n): [0,T] \to \mathbb{R}^n$ is the unknown function. These equations are subject to the initial conditions:
\begin{equation}\label{carathin}
\left\{
\begin{array}{c}
\varphi_1(0) = \xi_1, \\
\vdots \\
\varphi_n(0) = \xi_n,
\end{array}
\right.
\end{equation}
where $\xi = (\xi_1, \cdots, \xi_n) \in \mathbb{R}^n$ is given. 

To the best of our knowledge, discussions on the existence and uniqueness of solutions to \eqref{caratheq}-\eqref{carathin} are scarce. Recently, we came across the term "multi-order fractional differential equations," which some researchers use to describe similar problems related to \eqref{caratheq}-\eqref{carathin}. Therefore, we find it important to present the most relevant literature that has come to our attention on this topic.

Diethelm and Ford in \cite{DieFo1} addressed a problem, which can be seen as a special case of \eqref{caratheq}-\eqref{carathin} when expressed in matrix form. They required the derivative orders to be rational numbers with some specific conditions, and proved the existence and uniqueness of solutions under the assumption that $f$ is a Lipschitz function.

Faghih and Mokhtary \cite{FaMo1} also considered \eqref{caratheq}-\eqref{carathin} with $f$ being a Lipschitz function and the derivative orders as rational numbers. They proved existence and uniqueness of solution and analyzed how perturbed data affects the solution's behavior and smoothness under certain assumptions.

In our previous work \cite{CarToLo1}, we investigated the time-fractional wave equation with acoustic boundary conditions, which eventually led us to study a system of fractional differential equations with distinct orders. Unlike previous studies, we did not restrict the orders to be rational numbers. In that work, we proved the existence and uniqueness of a solution to the problem \eqref{caratheq}-\eqref{carathin}, when $f$ is a continuous and Lipschitz function on the first variable.

In this paper, our main objective is to prove that if $f$ is an $L^p$-Carath\'{e}odory function (see Definition \ref{lpcarathdef}) and the smallest order of differentiation is greater than $1/p$, then the problem \eqref{caratheq}–\eqref{carathin} admits a unique continuous solution. Moreover, we show (cf. Theorem \ref{202505021500}) that condition $(C_2)$ is not only sufficient but also necessary; its violation may lead to the nonexistence of solutions. A similar phenomenon was already observed in the study by Lan and Webb, when all the orders of differentiation are equal.

To conclude this introductory section, we outline the organization of the paper. In Section 2, we fix the notation and introduce key concepts, such as the Riemann–Liouville fractional integral and the Caputo fractional derivative, along with several technical results that are used throughout the manuscript. 

Section 3 presents our main contributions: an integral formulation of the problem \eqref{caratheq}-\eqref{carathin} (Proposition \ref{integralequ}); a key new inequality involving the Riemann–Liouville fractional integral (Lemma \ref{lemaint}); the existence and uniqueness of a solution to the problem \eqref{caratheq}-\eqref{carathin} (Theorem \ref{202504011725}); and the nonexistence of a solution when one of the orders is exactly $1/p$ (Theorem \ref{202505021500}).

\section{Notations and Results}

This section aims to recall some classical definitions and results concerning vector-valued functions, as well as to present some less classical yet straightforward results that will be used later in this manuscript. For further reading on these topics, we refer to Arendt et al.~\cite{ArBaHiNe1}, Carvalho-Neto et al.~\cite{CarFe0}, and Diestel et al.~\cite{DiUh1}.  

Let us begin setting the function spaces recurrently used in this work. For $n\in\mathbb{N}$, we denote the space $C([0,T];\mathbb{R}^n)$ as the set of all continuous functions $f:[0,T]\to \mathbb{R}^n$. When equipped with the norm  
$$\|f\|_{C([0,T];{\mathbb{R}^n})}:=\sup\big\{\|f(t)\|_{\mathbb{R}^n}:t\in[0,T]\big\},$$  
it forms a Banach space.

For $1\leq p\leq\infty$, we denote by $L^{p}(0,T;\mathbb{R}^n)$ the space of all measurable functions $f:[0,T]\to \mathbb{R}^n$ such that $\|f(\cdot)\|_{\mathbb{R}^n}$ belongs to $L^{p}(0,T)$. Furthermore, when equipped with the norm
$$
\|f\|_{L^p(0,T;{\mathbb{R}^n})}:=\left\{\begin{array}{ll}
\bigg[\displaystyle\int_{0}^{T}{\|f(s)\|^p_{\mathbb{R}^n}}\,ds\bigg]^{1/p},&\textrm{ if }p\in[1,\infty),\vspace*{0.3cm}\\
\esssup_{s\in [0,T]}\|f(s)\|_{\mathbb{R}^n},&\textrm{ if }p=\infty,
\end{array}\right.
$$
it forms a Banach space.

For $1\leq p\leq\infty$, we denote by $W^{1,p}(0,T;{\mathbb{R}^n})$ the subspace of $L^{p}(0,T;{\mathbb{R}^n})$ of every function $f:[0,T]\rightarrow \mathbb{R}^n$ that has a weak derivatives in $L^{p}(0,T;{\mathbb{R}^n})$. By considering the norm
$$\|f\|_{W^{1,p}(0,T;{\mathbb{R}^n})}:=\big\|f\big\|_{L^{p}(0,T;{\mathbb{R}^n})}+\big\|f^\prime\big\|_{L^{p}(0,T;{\mathbb{R}^n})},$$
the space $W^{1,p}(0,T;{\mathbb{R}^n})$ becomes a Banach space. 

Now, we recall the notions of the Riemann-Liouville fractional integral and the Caputo fractional derivative, which are the main tools used in this manuscript. For a more detailed survey on this topics, we refer to \cite{CarFe0,KiSrTr1,SaKiMa1}.

\begin{definition}
	Suppose $\alpha >0$ and $f\in L^{1}(0,T;\mathbb{R}^n)$. The Riemann-Liouville fractional integral of order $\alpha$ of the function $f$, denoted by $J_{t}^{\alpha}f(t)$, is given by
	\begin{equation*}
		J_{t}^{\alpha}f(t):=\dfrac{1}{\Gamma(\alpha)}\int_0^t{(t-s)^{\alpha-1} f(s)}\,ds,\quad \textrm{ for a.e. }t\in[0,T].
	\end{equation*}
\end{definition}

\begin{definition}
	Let $\alpha\in(0,1)$ and $f\in C([0,T],\mathbb{R}^n)$ such that $J^{1-\alpha}_t f \in W^{1,1}(0,T;\mathbb{R}^n)$. The Caputo fractional derivative, of order $\alpha$, of the function $f$, denoted as $\capt f(t)$, is given by
	\begin{equation*}
		\capt f(t):=\dfrac{d}{dt}\Big\{J_{t}^{1-\alpha}\big[f(t)-f(0)\big]\Big\},\;\;\;\;\;  \textrm{ for a.e. }t\in[0,T],
	\end{equation*}
where above $(d/dt)$ represents the weak derivative.
\end{definition}

To conclude this section, we present two results, along with appropriate references, that will be essential for the development of our main results in the following section.

\begin{proposition}[{\cite[Proposition 2.35]{Car1}}]\label{202504011515}
	Let  $\alpha\in(0,1)$ and $f\in C([0,T],\mathbb{R}^n)$. If $J_t^{1-\alpha}f\in W^{1,1}(0,T;\mathbb{R}^n)$, then $J_{t}^{\alpha}\big[\capt h(t)\big]=h(t)-h(0),\textrm{ for a.e. }t\in[0,T].$
\end{proposition}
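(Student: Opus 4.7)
The plan is to introduce the auxiliary function $g(t) := J_t^{1-\alpha}[h(t) - h(0)]$ and exploit the semigroup property of the Riemann--Liouville integral. Since $J_t^{1-\alpha}[h(0)](t) = h(0)\, t^{1-\alpha}/\Gamma(2-\alpha)$ lies in $W^{1,1}(0,T;\mathbb{R}^n)$ for $\alpha\in(0,1)$, the hypothesis $J_t^{1-\alpha} h \in W^{1,1}(0,T;\mathbb{R}^n)$ forces $g \in W^{1,1}(0,T;\mathbb{R}^n)$, and by the very definition of the Caputo derivative we have $g'(t) = \capt h(t)$ for almost every $t \in [0,T]$.

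A crucial preliminary is to check that $g(0) = 0$. Since $h$ is continuous on $[0,T]$, the quantity $\|h(s) - h(0)\|_{\mathbb{R}^n}$ is uniformly bounded on $[0,t]$, and a direct estimate gives
$$\|g(t)\|_{\mathbb{R}^n} \leq \frac{\sup_{s\in[0,t]}\|h(s)-h(0)\|_{\mathbb{R}^n}}{\Gamma(2-\alpha)}\, t^{1-\alpha} \longrightarrow 0 \quad \textrm{as } t \to 0^+.$$
Combined with the absolute continuity of $g$ inherited from $g \in W^{1,1}$, this yields the key representation $g(t) = \int_0^t g'(s)\, ds = J_t^1\big[\capt h\big](t)$ for every $t \in [0,T]$.

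The heart of the argument is then a short chain of identities. Invoking the semigroup property $J_t^{\alpha} \circ J_t^{\beta} = J_t^{\alpha+\beta}$ on $L^1(0,T;\mathbb{R}^n)$, which is a standard consequence of Fubini--Tonelli together with the Beta function identity, and the evident commutativity of $J_t^1$ with $J_t^\alpha$, I would compute
$$J_t^1\big[J_t^\alpha\, \capt h\big](t) \;=\; J_t^\alpha\big[J_t^1\, \capt h\big](t) \;=\; J_t^\alpha\, g(t) \;=\; J_t^\alpha J_t^{1-\alpha}\big[h - h(0)\big](t) \;=\; J_t^1\big[h - h(0)\big](t).$$
Since both outermost expressions are absolutely continuous functions of $t$ that agree for every $t \in [0,T]$, the Lebesgue differentiation theorem immediately gives $J_t^\alpha\big[\capt h(t)\big] = h(t) - h(0)$ for almost every $t$.

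The only technical obstacle is bookkeeping: one must verify $g(0)=0$ so that the step $J_t^1\capt h = g$ (rather than $g - g(0)$) is valid, and one must rigorously justify the $L^1$ semigroup identity so that $J_t^\alpha J_t^{1-\alpha}[h-h(0)] = J_t^1[h-h(0)]$ holds as an equality of functions, not merely a.e. Both points are classical, so the proof reduces essentially to the clean chain of identities displayed above.
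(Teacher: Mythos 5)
Your proof is correct, and it follows the standard route for this fundamental-theorem-of-fractional-calculus identity: set $g = J_t^{1-\alpha}[h-h(0)]$, verify $g(0)=0$ so that $g = J_t^1\big[\capt h\big]$, then use the semigroup property $J_t^\alpha J_t^{1-\alpha} = J_t^1$ and differentiate the resulting equality of absolutely continuous functions. The paper itself gives no in-text proof (it cites \cite[Proposition 2.35]{Car1}), and your argument is essentially the classical one found in that reference, with the bookkeeping points — $g(0)=0$ via the $t^{1-\alpha}$ estimate, and upgrading the a.e.\ semigroup identity to an everywhere identity between continuous functions before invoking Lebesgue differentiation — handled correctly.
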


\begin{theorem}[{\cite[Theorem 7]{CarFe3}}]\label{202504011518} Consider $p\in(1,\infty)$, $\alpha\in(1/p,\infty)$ and assume that $f\in L^p(0,T;\mathbb{R}^n)$. Then $J_{t}^\alpha f\in C([0,T];\mathbb{R}^n )$.
\end{theorem}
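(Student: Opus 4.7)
The plan is to combine a uniform Hölder-type estimate with a density argument. The crucial observation is that the hypothesis $\alpha>1/p$ is exactly what makes the convolution kernel $s\mapsto s^{\alpha-1}$ belong to $L^q(0,T)$ for the conjugate exponent $q=p/(p-1)$, since one needs $(\alpha-1)q>-1$, i.e., $\alpha>1-1/q=1/p$.

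First I would establish a pointwise bound. For $t\in[0,T]$ and $f\in L^p(0,T;\mathbb{R}^n)$, Hölder's inequality gives
\begin{equation*}
\|J_t^\alpha f(t)\|_{\mathbb{R}^n}\le \frac{1}{\Gamma(\alpha)}\left(\int_0^t (t-s)^{(\alpha-1)q}\,ds\right)^{1/q}\|f\|_{L^p(0,T;\mathbb{R}^n)}= \frac{t^{\alpha-1/p}}{\Gamma(\alpha)\,[(\alpha-1)q+1]^{1/q}}\,\|f\|_{L^p(0,T;\mathbb{R}^n)}.
\end{equation*}
This yields a uniform bound $\|J_t^\alpha f\|_{L^\infty}\le C(\alpha,p,T)\|f\|_{L^p}$, i.e.\ $J_t^\alpha$ is a bounded linear operator from $L^p(0,T;\mathbb{R}^n)$ into $L^\infty(0,T;\mathbb{R}^n)$.

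Next I would prove the conclusion for a dense subclass, say $g\in C([0,T];\mathbb{R}^n)$. For $0\le t_1<t_2\le T$, I split
\begin{equation*}
\Gamma(\alpha)\bigl[J_{t}^\alpha g(t_2)-J_{t}^\alpha g(t_1)\bigr]=\int_0^{t_1}\bigl[(t_2-s)^{\alpha-1}-(t_1-s)^{\alpha-1}\bigr]g(s)\,ds+\int_{t_1}^{t_2}(t_2-s)^{\alpha-1}g(s)\,ds.
\end{equation*}
The second piece is bounded by $\|g\|_\infty (t_2-t_1)^\alpha/\alpha$. For the first piece, since $(t_1-s)^{\alpha-1}\ge (t_2-s)^{\alpha-1}$ when $\alpha<1$ (and with the opposite monotonicity handled similarly when $\alpha\ge 1$), the bracket is integrable in $s$ and tends to $0$ pointwise as $t_2\to t_1$; the dominated convergence theorem then yields continuity. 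Continuity at $t=0$ is immediate from the estimate $\|J_t^\alpha g(t)\|\le \|g\|_\infty t^\alpha/\Gamma(\alpha+1)\to 0$.

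Finally I would upgrade to arbitrary $f\in L^p(0,T;\mathbb{R}^n)$ by density. Choose $g_k\in C([0,T];\mathbb{R}^n)$ with $g_k\to f$ in $L^p$. By the uniform estimate of the first step applied to $f-g_k$,
\begin{equation*}
\|J_t^\alpha f - J_t^\alpha g_k\|_{L^\infty(0,T;\mathbb{R}^n)}\le C(\alpha,p,T)\,\|f-g_k\|_{L^p(0,T;\mathbb{R}^n)}\longrightarrow 0,
\end{equation*}
so $J_t^\alpha g_k\to J_t^\alpha f$ uniformly on $[0,T]$. Since each $J_t^\alpha g_k$ is continuous by the second step and uniform limits of continuous functions are continuous, $J_t^\alpha f\in C([0,T];\mathbb{R}^n)$. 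The only delicate point is the first subintegral in the continuous-$g$ step, where one must be careful about the sign of $(t_2-s)^{\alpha-1}-(t_1-s)^{\alpha-1}$ when $\alpha\neq 1$ in order to invoke dominated convergence; otherwise everything reduces to Hölder and a density argument.
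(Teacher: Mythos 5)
Your proof is correct, but note that the paper itself does not prove this statement at all: it is imported verbatim from the external reference \cite{CarFe3}, so within this manuscript there is no proof to match, and what you have supplied is a legitimate self-contained substitute. Your route --- the uniform bound $\|J^{\alpha}_t f\|_{L^\infty}\le C(\alpha,p,T)\|f\|_{L^p}$ via H\"{o}lder (your constant $[(\alpha-1)q+1]^{1/q}=[(\alpha p-1)/(p-1)]^{1/q}$ is exactly the constant $c$ the paper recomputes inside the proof of Proposition \ref{integralequ}), continuity on the dense class $C([0,T];\mathbb{R}^n)$, and a uniform-limit upgrade --- is genuinely different from the classical treatment in the cited literature, which goes back to Hardy and Littlewood \cite{HaLi}: there one applies the same two-piece splitting and H\"{o}lder's inequality \emph{directly to} $f\in L^p$, using for $\alpha<1$ the elementary inequality $\bigl[(t_1-s)^{\alpha-1}-(t_2-s)^{\alpha-1}\bigr]^q\le (t_1-s)^{(\alpha-1)q}-(t_2-s)^{(\alpha-1)q}$, which yields the quantitative estimate $\|J^{\alpha}_t f(t_2)-J^{\alpha}_t f(t_1)\|_{\mathbb{R}^n}\le C\,\|f\|_{L^p}\,(t_2-t_1)^{\alpha-1/p}$. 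The direct method thus buys H\"{o}lder continuity of exponent $\alpha-1/p$ (the sharper conclusion actually established in \cite{CarFe3}) with no density argument, while your method is more elementary, avoids the sign bookkeeping on the kernel difference (you only need the crude domination $|(t_2-s)^{\alpha-1}-(t_1-s)^{\alpha-1}|\le (t_1-s)^{\alpha-1}$ for $\alpha<1$, and boundedness for $\alpha\ge 1$), but delivers only continuity. Two small points worth making explicit in your write-up: the density of $C([0,T];\mathbb{R}^n)$ in $L^p$ is precisely where the hypothesis $p<\infty$ enters (the statement's restriction $p\in(1,\infty)$ is thus used twice, once for the conjugate exponent and once here); and since $\alpha>1/p$ makes the defining integral absolutely convergent at \emph{every} $t$, your uniform convergence argument legitimately identifies the everywhere-defined pointwise function, not merely an a.e.~representative, with a continuous one.
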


\section{The General Caratheodory's Theorem}

In this section, we establish the existence and uniqueness of a solution to \eqref{caratheq}-\eqref{carathin} under the assumption that $f: \mathbb{R}^n \times [0,T] \subset \mathbb{R}^{n+1} \to \mathbb{R}^n$ is a Carathéodory function satisfying condition $(C_2)$ from Theorem \ref{classiccaratheodory}, replacing $(C_1)$ with the following assumption:
\begin{itemize}
\item[($C_p^*$)] For some $1\leq p<\infty$, there exist $\gamma \in L^p(0,T)$ and a constant $C>0$ such that
\begin{equation*}
\left\| f(x,t) \right\|_{\mathbb{R}^n} \leq C\|x\|_{\mathbb{R}^n} + \gamma(t),
\end{equation*}
for all $x\in\mathbb{R}^n$ and almost every $t\in[0,T]$.\vspace*{0.2cm}
\end{itemize}

\begin{definition} \label{lpcarathdef}
A Carathéodory function $f: \mathbb{R}^n \times [0,T] \subset \mathbb{R}^{n+1} \to \mathbb{R}^n$ that satisfies both conditions $(C_p^*)$ and $(C_2)$ is called an $L^p$-Carathéodory function.\end{definition}

\begin{remark}\label{202505061925} Notice that an $L^1$-Carath\'{e}odory function is simply a Carathéodory function that satisfies both conditions $(C_1)$ and $(C_2)$. The $L^p$-Carath\'{e}odory notion is necessary to address the regularity issues introduced by the Riemann--Liouville fractional integral, as extensively discussed in the works of Carvalho-Neto and Fehlberg J\'{u}nior \cite{CarFe0,CarFe1,CarFe2,CarFe3}.
\end{remark}

Before proceeding with the study of existence and uniqueness, we recall a classical result concerning the Nemytskii operator (cf. \cite[Theorem 2.2]{AmPr1} and \cite[Theorem 1.27]{Ro1}).

\begin{proposition} \label{teonemytskii2}
Let $f: \mathbb{R}^n \times [0,T] \to \mathbb{R}^n$ be an $L^p$-Carathéodory function and suppose $u\in L^p(0,T;\mathbb{R}^n)$. Then the Nemytskii function $\mathcal{N}_f(u): [0,T] \to \mathbb{R}^n$ defined by
\begin{equation*}
\mathcal{N}_f(u)(t):= f(u(t),t),
\end{equation*}
belongs to $L^p(0,T;\mathbb{R}^n)$.
\end{proposition}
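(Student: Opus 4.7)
The plan is to decouple the two requirements---measurability of the composition and $L^p$-integrability---and to treat them separately, since condition $(C_p^*)$ delivers the integrability as soon as measurability has been secured. First I would prove that $t\mapsto f(u(t),t)$ is Lebesgue measurable using only the Carathéodory structure of $f$ together with the measurability of $u$; then I would invoke $(C_p^*)$ and Minkowski's inequality to reach the desired bound.

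For the measurability step, I would approximate $u\in L^p(0,T;\mathbb{R}^n)$ by a sequence of simple functions $u_k:[0,T]\to\mathbb{R}^n$ with $u_k(t)\to u(t)$ for almost every $t\in[0,T]$, which is standard (passing to a subsequence of an $L^p$-approximating sequence if necessary). Writing $u_k=\sum_{i=1}^{m_k}v_i^k\,\chi_{E_i^k}$ with disjoint measurable sets $E_i^k$, we have
$$f(u_k(t),t)=\sum_{i=1}^{m_k}\chi_{E_i^k}(t)\,f(v_i^k,t),$$
and each $t\mapsto f(v_i^k,t)$ is measurable by item $(ii)$ of the Carathéodory definition, so $t\mapsto f(u_k(t),t)$ is measurable. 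By item $(i)$, for a.e.\ $t$ the map $x\mapsto f(x,t)$ is continuous, so $f(u_k(t),t)\to f(u(t),t)$ almost everywhere, and the pointwise a.e.\ limit of measurable functions is measurable.

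For the $L^p$ bound, condition $(C_p^*)$ yields
$$\|f(u(t),t)\|_{\mathbb{R}^n}\leq C\|u(t)\|_{\mathbb{R}^n}+\gamma(t),\qquad\text{for a.e. }t\in[0,T],$$
with $\gamma\in L^p(0,T)$ and $\|u(\cdot)\|_{\mathbb{R}^n}\in L^p(0,T)$ by hypothesis. Minkowski's inequality in $L^p(0,T)$ then gives
$$\|\mathcal{N}_f(u)\|_{L^p(0,T;\mathbb{R}^n)}\leq C\|u\|_{L^p(0,T;\mathbb{R}^n)}+\|\gamma\|_{L^p(0,T)}<\infty,$$
which finishes the argument.

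The only genuinely delicate point is the measurability step above; the rest is an immediate consequence of the growth condition. Since this is a classical fact and the paper already cites standard references (\cite{AmPr1,Ro1}), I would keep the presentation concise and, if desired, merely sketch the simple-function approximation to justify that the result extends unchanged from the $p=1$ version to arbitrary $p\in[1,\infty)$, the only modification being the choice of norm in the final estimate.
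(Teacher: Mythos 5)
Your proof is correct and is essentially the paper's own approach: the paper gives no proof of Proposition \ref{teonemytskii2}, presenting it as a classical result with references to \cite[Theorem 2.2]{AmPr1} and \cite[Theorem 1.27]{Ro1}, and your simple-function approximation for measurability combined with the $(C_p^*)$ growth bound and Minkowski's inequality is precisely the standard argument found there. One cosmetic point: arrange the simple functions so that the sets $E_i^k$ partition all of $[0,T]$ (adding a cell on which $u_k$ takes the value $0$), since otherwise the identity $f(u_k(t),t)=\sum_{i}\chi_{E_i^k}(t)\,f(v_i^k,t)$ fails off $\bigcup_i E_i^k$, where the left-hand side equals $f(0,t)$ rather than $0$.
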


Now, we formalize the notion of a solution to the Cauchy problem \eqref{caratheq}-\eqref{carathin}, following the classical framework for ordinary differential equations in $\mathbb{R}^n$.

\begin{definition}
A function $\varphi:[0,T]\rightarrow \mathbb{R}^n$ is said to be a solution of the Cauchy problem \eqref{caratheq}-\eqref{carathin} on $[0,T]$, when $f$ is a Carathéodory function, if it satisfies the following conditions:
\begin{itemize}
\item[(i)] $\varphi \in C([0,T];\mathbb{R}^n)$ and $(\capto^{\alpha_1}\varphi_1,\ldots,\capto^{\alpha_n}\varphi_n) \in L^p(0,T;\mathbb{R}^n)$,
\item[(ii)] $\varphi$ satisfies the equations given in \eqref{caratheq}-\eqref{carathin}.
\end{itemize}

\end{definition}

With the aid of Proposition \ref{teonemytskii2}, we can  prove an integral formulation to \eqref{caratheq}-\eqref{carathin}.

\begin{proposition}\label{integralequ}
Suppose $n \in \mathbb{N}$, $\{\alpha_j\}_{j=1}^n \subset (0,1]$ and $p > \max\{1/\alpha_j : j \in \{1, \dots, n\}\}$. Let $f: \mathbb{R}^n \times [0,T] \to \mathbb{R}^n$ be an $L^p$-Carathéodory function. Then a function $\varphi$ is a solution to the system \eqref{caratheq}-\eqref{carathin} on $[0,T]$ if, and only if, it satisfies the system of integral equations
\begin{equation} \label{eqint}
\varphi_j(t) = \xi_j + \dfrac{1}{\Gamma(\alpha_j)} \int_{0}^{t} (t-s)^{\alpha_j - 1} f_j\big(\varphi(s),s\big) \, ds, \quad \forall t \in [0,T],
\end{equation}
for every $j \in \{1,\dots,n\}$.
\end{proposition}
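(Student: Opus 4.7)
The plan is to prove the equivalence by inverting $\capto^{\alpha_j}$ and $J_t^{\alpha_j}$ against each other, using Proposition \ref{202504011515}, the semigroup property $J_t^{1-\alpha_j} J_t^{\alpha_j} = J_t^1$ of the Riemann--Liouville integral, and Theorem \ref{202504011518} to promote almost-everywhere equalities to pointwise equalities on $[0,T]$.

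For the forward direction, assume $\varphi$ is a solution of \eqref{caratheq}-\eqref{carathin}. Since $\varphi \in C([0,T];\mathbb{R}^n) \subset L^p(0,T;\mathbb{R}^n)$, Proposition \ref{teonemytskii2} ensures $f_j(\varphi(\cdot),\cdot) \in L^p(0,T)$ for each $j$. Applying $J_t^{\alpha_j}$ to the $j$-th equation in \eqref{caratheq}, which holds almost everywhere, and using Proposition \ref{202504011515}, I would obtain
$$\varphi_j(t) - \xi_j = J_t^{\alpha_j}\bigl[f_j(\varphi(\cdot),\cdot)\bigr](t) = \frac{1}{\Gamma(\alpha_j)}\int_0^t (t-s)^{\alpha_j - 1} f_j(\varphi(s),s)\,ds$$
for almost every $t \in [0,T]$. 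The left-hand side is continuous in $t$, and since $\alpha_j > 1/p$, Theorem \ref{202504011518} guarantees that the right-hand side is continuous as well. Two continuous functions agreeing almost everywhere agree everywhere, so \eqref{eqint} holds on all of $[0,T]$.

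For the backward direction, assume $\varphi$ satisfies \eqref{eqint} on $[0,T]$. Evaluating at $t=0$ gives $\varphi_j(0) = \xi_j$, and the continuity of each $\varphi_j$ follows from Theorem \ref{202504011518} applied to the right-hand side (noting that $f_j(\varphi(\cdot),\cdot) \in L^p(0,T)$ by Proposition \ref{teonemytskii2}, since the continuity of $\varphi$ can be bootstrapped from the equation itself). Rewriting \eqref{eqint} as $\varphi_j - \xi_j = J_t^{\alpha_j}[f_j(\varphi(\cdot),\cdot)]$ and applying $J_t^{1-\alpha_j}$, the semigroup property yields
$$J_t^{1-\alpha_j}\bigl[\varphi_j - \xi_j\bigr](t) = J_t^{1}\bigl[f_j(\varphi(\cdot),\cdot)\bigr](t) = \int_0^t f_j(\varphi(s),s)\,ds,$$
which is absolutely continuous and therefore lies in $W^{1,1}(0,T)$, with weak derivative $f_j(\varphi(t),t)$ almost everywhere. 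By the definition of the Caputo derivative, this exactly means $\capto^{\alpha_j}\varphi_j(t) = f_j(\varphi(t),t)$ for a.e. $t \in [0,T]$; Proposition \ref{teonemytskii2} then guarantees that this function lies in $L^p(0,T;\mathbb{R}^n)$, confirming $\varphi$ is a solution.

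The main obstacle I anticipate is the careful bookkeeping of regularity. Specifically, one must verify that the $W^{1,1}$ hypothesis required to define $\capto^{\alpha_j}\varphi_j$ in the backward direction is genuinely satisfied (rather than merely assumed), that the semigroup identity applies to the $L^p$-composition $f_j(\varphi(\cdot),\cdot)$, and that the almost-everywhere equalities can be upgraded to pointwise equalities on $[0,T]$ through the continuity provided by Theorem \ref{202504011518}. This is precisely where the hypothesis $p > \max_j(1/\alpha_j)$ is critical: it channels $L^p$-regularity of the right-hand side into continuity of the fractional integral, which in turn delivers a genuine continuous solution rather than an almost-everywhere one.
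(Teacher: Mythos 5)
Your proof is correct and follows essentially the same route as the paper's: applying $J_t^{\alpha_j}$ together with Proposition \ref{202504011515} in the forward direction, and the semigroup identity $J_t^{1-\alpha_j}J_t^{\alpha_j}=J_t^1$ plus the definition of the Caputo derivative in the backward direction, with Theorem \ref{202504011518} supplying continuity via $p>\max_j(1/\alpha_j)$. The only cosmetic difference is that you recover $\varphi_j(0)=\xi_j$ by evaluating the integral at $t=0$ directly, whereas the paper verifies $J_t^{\alpha_j}f_j(\varphi(t),t)\big|_{t=0}=0$ through an explicit H\"older estimate giving the bound $t^{\alpha_j-1/p}$ --- both are valid.
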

\begin{proof}
Let $\varphi$ be a solution of \eqref{caratheq}-\eqref{carathin} on $[0,T]$. If $\alpha_j = 1, \ j \in \{1, \dots , n\}$, we have the classical case and the result follows directly. We therefore consider the fractional case and assume that $\alpha_j \in (0,1)$. Under this assumption, $\capto ^{\alpha_j} \varphi_j\in L^p(0,T;\mathbb{R})$, for every $1 \leq j \leq n$.  This implies that $J_t^{1-\alpha_j} [\varphi_j(\cdot)-\varphi_j(0)]$ belongs to $W^{1,p}(0,T;\mathbb{R})$, for every $1\leq j\leq n$. Consequently, by applying $J^{\alpha_j}_t$ to the $j$-th equation in \eqref{caratheq} and using Proposition \ref{202504011515}, we establish that
\begin{equation*}
\varphi_j(t) - \varphi_j(0) = J^{\alpha_j}_t f_j(\varphi(t),t), \quad \forall t \in [0,T],
\end{equation*}
for every $1\leq j\leq n$. Considering the initial conditions in \eqref{carathin}, we can deduce \eqref{eqint}.

On the other hand, we assume that \eqref{eqint} holds. Since Proposition \ref{teonemytskii2}  allows us to deduce that $f_j(\varphi(\cdot),\cdot)\in L^p(0,T;\mathbb{R})$, for every $1\leq j\leq n$, we apply Theorem \ref{202504011518} to ensure that $J^{\alpha_j}_tf_j(\varphi(\cdot),\cdot) \in C([0,T];\mathbb{R})$, therefore, $ \varphi_j \in C([0,T];\mathbb{R})$, by \eqref{eqint}.

Now, we employ Hölder's inequality to see that 
\begin{multline*}
\left\vert J^{\alpha_j}_t f_j(\varphi(t),t) \right\vert \leq \frac{1}{\Gamma(\alpha_j)} \int_0^t (t-s)^{\alpha_j-1} \left\vert f(\varphi(s),s) \right\vert ds\\
\leq \left( \int_0^t (t-s)^{(\alpha_j-1) \frac{p}{p-1}} ds \right)^{\frac{p-1}{p}}  \left\Vert f(\varphi(\cdot),\cdot) \right\Vert_{L^p(0,T;\mathbb{R}^n)}
 =  \frac{t^{ \alpha_j -\frac{1}{p}} }{c}   \left\Vert f(\varphi(\cdot),\cdot) \right\Vert_{L^p(0,T;\mathbb{R}^n)},
\end{multline*}
where $c:= \left[({\alpha_j p -1})/({p-1}) \right]^{\frac{p-1}{p}}$.

Then, since $\alpha_j > 1/p$, for every $1\leq j\leq n$, we can take the limit as $t \to 0$ in the inequality above and deduce that $J^{\alpha_j}_t f_j(\varphi(t),t)\big|_{t=0} = 0$. As a consequence, from \eqref{eqint}, it follows that $\varphi_j(0) = \xi_j$, for every $j \in \{1, \dots, n\}$. Therefore, the initial condition \eqref{carathin} is satisfied.

Also observe that \eqref{eqint} implies
\begin{equation*}
J^{1-\alpha_j}_t \left[\varphi_j(t)-\varphi_j(0) \right] = J^{1-\alpha_j}_t \Big[J^{\alpha_j}_t f_j(\varphi(t),t)\Big] = J^1_t  f_j(\varphi(t),t),
\end{equation*}
for every $j \in \{1, \dots, n\}$.

Since $J^1_t f_j(\varphi(\cdot),\cdot) \in W^{1,p}(0,T;\mathbb{R})$, for every $j \in \{1, \dots, n\}$, it follows from the identity above that \eqref{caratheq} is satisfied.
\end{proof}

\begin{remark} \label{202504290836}
In the result above, the case where $\alpha_j = 1$ for every $j \in \{1, \dots, n\}$ corresponds to the classical setting. In this case, we may choose $p = \max\{1/\alpha_j : j \in \{1, \dots, n\}\} = 1$, an option that is not available in the fractional formulation due to the regularity properties of the RL fractional integral. For further details on the regularity of the RL fractional integral, we refer to \cite{CarFe3}.
\end{remark}

Before presenting our main result, we establish the following auxiliary lemma, which is noteworthy in its own right and requires a substantial amount of work to be proved.

\begin{lemma} \label{lemaint}
Let $\rho\in(0,1)$, $q\in(1/\rho,\infty]$ and suppose that $g \in L^q(0,T;\mathbb{R})$ with $g(t) \geq 0$ for almost every $t \in [0,T]$. Then, there exists $n_0 \in \mathbb{N}$ such that
\begin{equation*}
\underbrace{J^\rho_t \Big\{ g(t) J^\rho_t \Big[ g(t) J^\rho_t\Big(\cdots g(t)J^\rho_t g(t)\Big) \Big] \Big\}  }_{n_0 \text{-times}} < 1, \quad \textrm{ for a.e. } t \in [0,T].
\end{equation*}
\end{lemma}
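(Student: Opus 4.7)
The plan is to establish a pointwise bound $w_n(t) \le A_n t^{n\gamma}$ on the $n$-fold iterate, where $w_0 \equiv 1$ and $w_{n+1}(t) := J^\rho_t(g\,w_n)(t)$, with $\gamma := \rho - 1/q > 0$ (positive precisely because $q > 1/\rho$). The expression in the lemma is exactly $w_{n_0}$, so it suffices to show $A_n T^{n\gamma} \to 0$ as $n \to \infty$ and then to pick any sufficiently large $n_0$.

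Set $q' := q/(q-1)$ (interpreted as $q' = 1$ when $q = \infty$) and $\beta := (\rho - 1)q' + 1$, so that $\beta > 0$ and $\beta/q' = \gamma$. Proceed by induction on $n$. Assuming $w_n(s) \le A_n s^{n\gamma}$, Hölder's inequality gives
$$w_{n+1}(t) \;\le\; \frac{A_n \,\|g\|_{L^q(0,T)}}{\Gamma(\rho)} \left(\int_0^t (t-s)^{(\rho-1)q'} s^{n\gamma q'}\,ds\right)^{1/q'}.$$
The substitution $s = t\sigma$ identifies the inner integral with $t^{\beta + n\gamma q'} B(\beta,\, n\gamma q' + 1)$; taking the $(1/q')$-th power produces $t^{(n+1)\gamma}$ (using $\beta/q' = \gamma$) multiplied by $B(\beta,\, n\gamma q' + 1)^{1/q'}$. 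This yields the recursion
$$A_{n+1} \;=\; \frac{\|g\|_{L^q(0,T)}}{\Gamma(\rho)} \,B(\beta,\, n\gamma q' + 1)^{1/q'}\, A_n, \qquad A_0 = 1.$$

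Unfolding the recursion gives $A_n = \bigl(\|g\|_{L^q}/\Gamma(\rho)\bigr)^n \prod_{k=0}^{n-1} B(\beta,\, k\gamma q'+1)^{1/q'}$. By Stirling's asymptotics, $B(\beta, x) = \Gamma(\beta)\Gamma(x)/\Gamma(\beta+x) \sim \Gamma(\beta)\,x^{-\beta}$ as $x\to\infty$, so $B(\beta,\, k\gamma q' + 1)^{1/q'} \sim \Gamma(\beta)^{1/q'}(k\gamma q')^{-\gamma}$. The product therefore behaves like $D^n/(n!)^\gamma$ for a constant $D = D(\rho, q, \|g\|_{L^q})$, whence $A_n T^{n\gamma} \le (D T^\gamma)^n/(n!)^\gamma \to 0$ at super-exponential speed. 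Choosing $n_0$ large enough that this bound is strictly less than $1$ gives $w_{n_0}(t) \le A_{n_0} T^{n_0\gamma} < 1$ for \emph{every} $t \in [0,T]$, which is in particular true almost everywhere.

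The main obstacle is identifying the right inductive ansatz: the power $t^{n\gamma}$ must be chosen so that the beta-function integral closes up with the correct exponent on $t$ at the next step — this is exactly what forces the identity $\beta/q' = \gamma$ to appear. A secondary but essential point is that the hypothesis $q > 1/\rho$ plays a double role: it guarantees integrability in Hölder (so $\beta > 0$) and, more importantly, it makes $\gamma > 0$, which is precisely what produces a genuine factorial denominator $(n!)^\gamma$ and hence the required decay.
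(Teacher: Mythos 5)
Your proposal is correct and follows essentially the same path as the paper: the same inductive H\"older estimate with exponent $q'=q/(q-1)$, the same exponent $\beta=(\rho q-1)/(q-1)$ with $\beta/q'=\gamma=\rho-1/q$, and the same uniform bound, since telescoping your product of Beta functions via $B(\beta,k\beta+1)=\Gamma(\beta)\Gamma(k\beta+1)/\Gamma((k+1)\beta+1)$ shows that $A_nT^{n\gamma}$ is exactly the paper's constant $C_n=\bigl[\|g\|_{L^q}/\Gamma(\rho)\bigr]^n\bigl(\Gamma(\beta)^n/(n\beta\Gamma(n\beta))\bigr)^{1/q'}T^{n\gamma}$. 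The only cosmetic difference is the final convergence step: you use the asymptotics $B(\beta,x)\sim\Gamma(\beta)x^{-\beta}$ to extract the factorial decay $(n!)^{-\gamma}$, whereas the paper reaches the same conclusion by applying Wendel's Gamma-function inequality together with a ratio-type test to show $C_n\to 0$.
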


\begin{proof}
At first, let us we make some preliminary observations and introduce convenient notations. 
\begin{itemize}
\item[(i)] Since $q > 1/\gamma$, it follows that $J^\rho_t g \in C([0,T];\mathbb{R})$ (see Theorem \ref{202504011518}).\vspace*{0.2cm}
\item[(ii)] For simplicity, let us denote $\beta := (\rho q - 1)/(q - 1)$. Since $\rho\in(1/q,1)$, it follows that $\beta\in(0,1)$.\vspace*{0.2cm}
\item[(iii)] Recall that the Hölder conjugate of $q$ is given by $q^* := q/(q - 1)$. From this, we deduce the identities
$$
(\rho - 1)q^* = \beta - 1 \quad \text{and} \quad \big[\rho - (1/q)\big] q^* = \beta.
$$
\end{itemize}

We are now ready to begin the inductive argument, which forms the core of the proof. For $n = 1$, applying H\"{o}lder's inequality, we obtain:
\begin{multline*}
J^\rho_t g(t) = \frac{1}{\Gamma(\rho)}\int_0^t (t-s)^{\rho-1} g(s)ds
 \leq  \frac{\left\Vert g \right\Vert_{L^q(0,T;\mathbb{R})}}{\Gamma(\rho)} \left( \int_0^t (t-s)^{(\rho-1)q^*} ds \right)^{{1}/{q^*}}  \\
 = \left(\frac{\left\Vert g \right\Vert_{L^q(0,T)}}{\Gamma(\rho)}\right)\left( \frac{\Gamma(\beta)}{ \beta \Gamma(\beta)} \right)^{{1}/{q^*}}   t^{\rho-\frac{1}{q}},
\end{multline*}
for every $t\in[0,T]$.

For $n = 2$, using the conclusion obtained in the case $n = 1$, we obtain
\begin{multline*}
J^\rho_t \big[ g(t) J^\rho_t g(t) \big] = \frac{1}{\Gamma(\rho)} \int_0^t (t-s)^{\rho-1} g(s)J^\rho_s g(s) ds 
\\\leq  \left(\frac{\left\Vert g \right\Vert^2_{L^q(0,T)}}{\Gamma(\rho)^2}\right)\left( \frac{\Gamma(\beta)}{ \beta \Gamma(\beta)} \right)^{{1}/{q^*}} \left(\int_0^t (t-s)^{(\rho-1)q^*}s^{[\rho-(1/q)]q^*}ds\right)^{1/q^*},
\end{multline*}
for every $t\in[0,T]$, and consequently,
$$
J^\rho_t \big[ g(t) J^\rho_t g(t) \big] \leq \left( \frac{\left\Vert g \right\Vert_{L^q(0,T)}}{\Gamma(\rho)} \right)^2 \left( \frac{\Gamma(\beta)^2}{ 2\beta \Gamma(2\beta)} \right)^{{1}/{q^*}}  t^{2 [\rho-(1/q)]}. 
$$
for every $t\in[0,T]$.

In the general case, we estimate
\begin{equation*}
\underbrace{J^\rho_t \{ g(t) J^\rho_t [ g(t) \cdots J^\rho_t g(t) ] \}  }_{n\textrm{-times}} \leq \left[ \frac{\left\Vert g \right\Vert_{L^q(0,T)}}{\Gamma(\rho)} \right]^n \left( \frac{\Gamma(\beta)^n}{ n \beta \Gamma(n \beta)} \right)^{{1}/{q^*}}  t^{n [\rho-(1/q)]},
\end{equation*}
for every $t\in[0,T]$. Then if we define
$$C_n:=  \left[ \frac{\left\Vert g \right\Vert_{L^q(0,T)}}{\Gamma(\gamma)} \right]^n \left( \frac{\Gamma(\beta)^n}{ n\beta \Gamma(n\beta)} \right)^{{1}/{q^*}}T^{n [\gamma-(1/q)]},$$
we conclude that
$$\underbrace{J^\rho_t \{ g(t) J^\rho_t [ g(t) \cdots J^\rho_t g(t) ] \}  }_{n\textrm{-times}} \leq C_n,$$
for every $t\in[0,T]$.

Let us now prove that the series $\sum_{n=1}^\infty C_n$ is convergent. At first, observe that  
$$
\frac{C_{n+1}}{C_n} = \left[ \frac{\left\Vert f \right\Vert_{L^q(0,T)}T^{\rho-(1/q)}}{\Gamma(\rho)} \right] \left( \frac{\Gamma(\beta) \, n \, \Gamma(n \beta)}{ (n+1) \, \Gamma((n+1) \beta)} \right)^{1/q^*}.
$$
Therefore, to prove the convergence of the series, we need to analyze the behavior of the above term as $n \to \infty$. By \cite{WENDEL} we have 
\begin{equation*}
\left( \frac{x}{x+a} \right)^{1-a} \leq \frac{\Gamma(x+a)}{x^a \Gamma (x)} \leq 1, \quad \text{for all } x>0 \text{ and } 0 < a \leq 1,
\end{equation*}
or equivalently,
\begin{equation*}
\frac{1}{x^a}  \leq \frac{\Gamma(x)}{ \Gamma (x+a)} \leq \frac{(x+a)^{1-a}}{x}.
\end{equation*}

Hence, 
\begin{equation*}
\frac{\Gamma(n\beta)}{\Gamma((n+1)\beta)} \leq \frac{[(n+1)\beta]^{1-\beta}}{n \beta},
\end{equation*}
what allow us to deduce that
\begin{equation*}
B_n := \frac{n\Gamma(n\beta)}{(n+1)\Gamma((n+1)\beta)} \leq \frac{n[(n+1)\beta]^{1-\beta}}{(n+1)n \beta} = \frac{1}{[(n+1)\beta]^\beta}.
\end{equation*}

Therefore, $B_n \leq {1}/{[(n+1)\beta]^\beta} \to 0$ as $n \to \infty$. This implies the desired result. The convergence of the series guarantees the existence of $n_0 \in \mathbb{N}$ such that $C_{n_0}< 1$.
\end{proof}

We now have at our disposal all the necessary tools, concepts, and definitions introduced throughout this work to prove our main result, which establishes a new contribution to the theory of fractional differential equations, particularly in the setting of Carathéodory-type functions, where such general results are still scarcely explored in the literature.

\begin{theorem}\label{202504011725} Suppose $n \in \mathbb{N}$, $\{\alpha_j\}_{j=1}^n \subset (0,1]$ and $p > \max\{1/\alpha_j : j \in \{1, \dots, n\}\}$. Let $f: \mathbb{R}^n \times [0,T] \to \mathbb{R}^n$ be an $L^p$-Carathéodory function. Then, for each $\xi \in \mathbb{R}^n$, there exists a unique solution $\varphi:[0,T] \to \mathbb{R}^n$ to the problem \eqref{caratheq}–\eqref{carathin}.
\end{theorem}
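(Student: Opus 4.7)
The plan is to recast \eqref{caratheq}-\eqref{carathin} via Proposition~\ref{integralequ} as a fixed-point problem for the operator $F : C([0,T];\mathbb{R}^n) \to C([0,T];\mathbb{R}^n)$ defined componentwise by
$$F(\varphi)_j(t) := \xi_j + J^{\alpha_j}_t f_j(\varphi(t),t), \qquad j=1,\dots,n.$$
Well-definedness on the target space follows from Proposition~\ref{teonemytskii2} (which places $f_j(\varphi(\cdot),\cdot)$ in $L^p(0,T;\mathbb{R})$) and Theorem~\ref{202504011518} (which promotes this to an element of $C([0,T];\mathbb{R})$ using $\alpha_j>1/p$). By Proposition~\ref{integralequ} the fixed points of $F$ coincide with the solutions of \eqref{caratheq}-\eqref{carathin}, so it suffices to show that $F$ admits exactly one fixed point.

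To produce that fixed point I plan to show that some iterate $F^{n_0}$ is a strict contraction on $C([0,T];\mathbb{R}^n)$ equipped with the sup-norm (using the max-norm on $\mathbb{R}^n$). Setting $\rho := \min_{1\leq j\leq n}\alpha_j$, so that $\rho>1/p$, the pointwise bound $(t-s)^{\alpha_j-1}\leq T^{\alpha_j-\rho}(t-s)^{\rho-1}$ on $(0,T]$ gives, for every nonnegative measurable $h$,
$$J^{\alpha_j}_t h \leq M\, J^\rho_t h, \qquad M := \max_{1\leq j\leq n}\frac{T^{\alpha_j-\rho}\,\Gamma(\rho)}{\Gamma(\alpha_j)}.$$
Combining this reduction with condition $(C_2)$ produces the one-step estimate
$$\bigl|F(\varphi)_j(t)-F(\psi)_j(t)\bigr| \leq J^{\alpha_j}_t\bigl[l(s)\,\|\varphi(s)-\psi(s)\|\bigr] \leq M\,J^\rho_t\bigl[l(s)\,\|\varphi(s)-\psi(s)\|\bigr],$$
and an induction on $k$ (at each step the $\xi_j$'s cancel in the difference) yields
$$\bigl\|F^k(\varphi)(t)-F^k(\psi)(t)\bigr\| \leq M^k \underbrace{J^\rho_t\bigl\{l(t)\,J^\rho_t\bigl[l(t)\cdots J^\rho_t\bigl[l(t)\,\|\varphi(t)-\psi(t)\|\bigr]\bigr]\bigr\}}_{k\text{-fold}}.$$
Bounding the innermost factor by the constant $\|\varphi-\psi\|_\infty$ and pulling it out reduces the right-hand side to exactly the iterated expression controlled by Lemma~\ref{lemaint}, applied with $g=l$, $q=p$, and $\rho=\min_j\alpha_j$; the admissibility $p>1/\rho$ is exactly our hypothesis $p>\max_j 1/\alpha_j$. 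The proof of that lemma furnishes uniform bounds $C_k$ on $[0,T]$ whose ratios satisfy $C_{k+1}/C_k\to 0$, a decay that dominates the geometric growth $M^k$; so one can pick $n_0$ with $M^{n_0}C_{n_0}<1$, and $F^{n_0}$ is a strict contraction.

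The Banach contraction principle applied to $F^{n_0}$ (equivalently, Weissinger's generalization applied to $F$ itself) then yields a unique fixed point $\varphi \in C([0,T];\mathbb{R}^n)$, which by Proposition~\ref{integralequ} is the unique solution of \eqref{caratheq}-\eqref{carathin}. The step I anticipate as the main obstacle is the inductive unfolding of the nested fractional integrals so that the resulting expression matches verbatim the template of Lemma~\ref{lemaint}, combined with verifying that the geometric constants $M^k$ arising from the reduction $J^{\alpha_j}\to J^\rho$ are swallowed by the rapid decay of $C_k$; both points are ultimately controlled by the ratio estimate already established inside the proof of Lemma~\ref{lemaint}.
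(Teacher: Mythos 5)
Your proposal is correct and follows essentially the same route as the paper's proof: the same reformulation via Proposition~\ref{integralequ}, the same kernel factorization $(t-s)^{\alpha_j-1}=(t-s)^{\alpha_j-\alpha_0}(t-s)^{\alpha_0-1}$ reducing every $J^{\alpha_j}_t$ to the minimal-order integral, the same iterated Lipschitz estimate, Lemma~\ref{lemaint}, and the Banach fixed point theorem applied to a high enough iterate of the solution operator. The only cosmetic deviations are that you keep $M^k$ outside the nested integrals and dispose of it via the ratio decay $C_{k+1}/C_k\to 0$ from the lemma's proof (the paper instead folds the constant in, applying Lemma~\ref{lemaint} directly with $g=M\ell$) and that you define $M$ as a maximum under the max-norm where the paper uses a sum; both variants are equally valid.
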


\begin{proof} By Proposition \ref{teonemytskii2}, for $\varphi \in C([0,T];\mathbb{R}^n)$, we have $\mathcal{N}_f(\varphi) \in L^p(0,T;\mathbb{R}^n)$. Since $p > \max\{1/\alpha_j : j \in \{1, \dots, n\}\}$, it follows from Theorem \ref{202504011518} that
$$
J^{\alpha_j}_t \big( \mathcal{N}_f(\varphi) \big)_j = J^{\alpha_j}_t f_j(\varphi(\cdot), \cdot) \in C([0,T];\mathbb{R}),
$$
for every $1 \leq j \leq n$. The same conclusion holds in the classical case $\alpha_j=1, \ j \in \{1, \dots , n\}$, by standard arguments.

This allows us to define the operator $\mathcal{T}: C([0,T];\mathbb{R}^n) \to C([0,T];\mathbb{R}^n)$ by
\begin{equation}\label{202505061854}
\mathcal{T}\big(\varphi(t)\big) = \xi +
\left(
\begin{array}{c}
J^{\alpha_1}_t f_1(\varphi(t),t) \\
\vdots \\
J^{\alpha_n}_t f_n(\varphi(t),t)
\end{array}
\right),
\end{equation}
or more explicitly, for each $1 \leq j \leq n$,
\begin{equation*}
\Big(\mathcal{T} 
\big(
\varphi(t)
\big)\Big)_j
=
\xi_j
+
\frac{1}{\Gamma(\alpha_j)} \int_0^t (t-s)^{\alpha_j-1} f_j(\varphi(s),s)\,ds.
\end{equation*}

Set $\alpha_0:=\min\{\alpha_j:j\in\{1,\ldots,n\}\}$ and consider
$$M:=\sum_{j=1}^n \dfrac{T^{\alpha_j-\alpha_0}\Gamma(\alpha_0)}{\Gamma(\alpha_j)}.$$
Now, observe that if $\varphi, \psi \in C([0,T];\mathbb{R}^n)$, $(C_2)$ ensures that
\begin{equation} \label{202504282018}
\left|\Big(\mathcal{T} 
\big(
\varphi(t)
\big)\Big)_j-\Big(\mathcal{T} 
\big(
\psi(t)
\big)\Big)_j\right|
 \leq J^{\alpha_j}_t \Big[\ell(t) \left\| \varphi(t) - \psi(t) \right\|_{\mathbb{R}^n} \Big],
\end{equation}
for every $t\in[0,T]$.

Since $\alpha_0 \leq \alpha_j$ for all $1 \leq j \leq n$, it follows that, for every $0 \leq s \leq t \leq T$,
\[
(t-s)^{\alpha_j-1} = (t-s)^{\alpha_j-\alpha_0}(t-s)^{\alpha_0-1}.
\]
Hence, from \eqref{202504282018} we have
\begin{equation*}
\left|\Big(\mathcal{T} 
\big(
\varphi(t)
\big)\Big)_j-\Big(\mathcal{T} 
\big(
\psi(t)
\big)\Big)_j\right| 
\leq 
\left[\dfrac{T^{\alpha_j-\alpha_0}\Gamma(\alpha_0)}{\Gamma(\alpha_j)}\right] J_t^{\alpha_0} \Big[\ell(t) \left\| \varphi(t) - \psi(t) \right\|_{\mathbb{R}^n} \Big],
\end{equation*}
for every $t \in [0,T]$, and consequently,
$$\left\|\mathcal{T}\big(\varphi(t)\big)-\mathcal{T}\big(\psi(t)\big)\right\|_{\mathbb{R}^n}\leq J_t^{\alpha_0} \Big[M\ell(t) \left\| \varphi(t) - \psi(t) \right\|_{\mathbb{R}^n} \Big],$$
for every $t\in[0,T]$. 

This allows us to obtain that
\begin{multline*}
\left\|\mathcal{T}^2\big(\varphi(t)\big)-\mathcal{T}^2\big(\psi(t)\big)\right\|_{\mathbb{R}^n} 
\leq J_t^{\alpha_0} \Big[M\ell(t) \left\| \mathcal{T}(\varphi(t)) - \mathcal{T}(\psi(t)) \right\|_{\mathbb{R}^n} \Big]
\\\leq J_t^{\alpha_0} \Big\{M\ell(t) J_t^{\alpha_0} \Big[M\ell(t) \left\| \varphi(t) - \psi(t) \right\|_{\mathbb{R}^n} \Big] \Big\}
\end{multline*}
for every $t\in[0,T]$. From this iterated procedure, we deduce for any $k \in \mathbb{N}$ that
\begin{multline*}
\left\|\mathcal{T}^k\big(\varphi(t)\big)-\mathcal{T}^k\big(\psi(t)\big)\right\|_{\mathbb{R}^n} 
\\ \leq \left\| \varphi - \psi \right\|_{C([0,T];\mathbb{R}^n)} \underbrace{J^{\alpha_0}_t \Big\{M\ell(t) J^{\alpha_0}_t \Big[ M\ell(t) J^{\alpha_0}_t\Big(\cdots M\ell(t)J^{\alpha_0}_t M\ell(t)\Big) \Big] \Big\}}_{k \text{-times}},
\end{multline*}
for every $t\in[0,T]$. Then, thanks to Lemma \ref{lemaint}, we deduce that there exists $k_0 \in \mathbb{N}$ such that $\mathcal{T}^{k_0}$ is a contraction. Therefore, by applying the Banach Fixed Point Theorem, we obtain the existence and uniqueness of $\varphi \in C([0,T];\mathbb{R}^n)$, which is the fixed point of $\mathcal{T}$. It follows directly from Proposition \ref{integralequ} that $\varphi$ is the unique solution to \eqref{caratheq}-\eqref{carathin}.
\end{proof}

We observe that when $\alpha_j = 1$ for every $j \in \{1, \dots, n\}$, we recover the classical case, which, unlike the setting in the previous theorem, requires only that $f$ be an $L^1$-Carathéodory function. This condition, however, is no longer sufficient when at least one $\alpha_j \in (0,1)$, as noted in Remark \ref{202504290836}.

To conclude this section, we summarize the importance of the assumptions made on the function $f$ in Theorem \ref{202504011725}:
\begin{itemize}
\item[(i)] The condition $p > \max\{1/\alpha_j : j \in \{1, \dots, n\}\}$ is essential to ensure that the operator $\mathcal{T}$, defined in \eqref{202505061854}, maps $C([0,T];\mathbb{R}^n)$ into itself.
\item[(ii)] If, in addition, conditions $(C_p^*)$ and $(C_2)$ are satisfied, their structures are enough for us to apply Banach's fixed point theorem, which ensures the uniqueness of the solution. 
\end{itemize}

\begin{remark} It is worth noting that, when $\alpha_j = 1$ for every $j \in \{1, \dots, n\}$, condition $(C_1)$ (which is naturally replaced by $(C_p^*)$ in the fractional framework; see Remark \ref{202505061925}) is sufficient to guarantee existence of a solution, while the additional Lipschitz-type condition $(C_2)$ becomes necessary to ensure uni\-que\-ness (see Theorems 1.44 and 1.45 in \cite{Ro1} for more details).  Although in this paper we have chosen to prove existence and uni\-que\-ness simultaneously under the stronger set of assumptions, we conjecture that assuming only $(C_p^*)$ would still suffice to establish existence of a solution, as in the classical setting. 
\end{remark}

Our goal now is to demonstrate that if condition $(C_p^*)$ is assumed in isolation and one of the differentiation orders equals $1/p$, the problem may fail to admit a solution. This highlights the critical relationship between the integrability condition in the $L^p$-Carathéodory framework and the strict inequality required for the fractional orders.

\begin{theorem}\label{202505021500} Let $p > 1$ and $f: \mathbb{R}^n \times [0,T] \to \mathbb{R}^n$ be a Carathéodory function that satisfies condition $(C_p^*)$. Suppose that $\{\alpha_j\}_{j=1}^n \subset (0,1]$ and that there exists $j_0 \in \{1, \ldots, n\}$ such that $\alpha_{j_0} = 1/p$. Then, the problem \eqref{caratheq}–\eqref{carathin} may not admit a solution $\varphi: [0,T] \to \mathbb{R}^n$.
\end{theorem}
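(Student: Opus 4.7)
The plan is to construct an explicit counterexample by reducing to the scalar case. I would take $n = 1$ and $\alpha_1 = 1/p$; the multi-dimensional case follows by padding with trivial components, for instance $\alpha_j = 1$ and $f_j \equiv 0$ for $j \neq j_0$, which yield $\varphi_j \equiv \xi_j$. The function $f$ will depend only on $t$: set $f(x,t) := g(t)$ for some $g \in L^p(0,T)$ to be chosen. This automatically makes $f$ a Carath\'{e}odory function satisfying $(C_p^*)$ with $C=0$ and $\gamma := g$.

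The crucial ingredient is producing $g \in L^p(0,T)$, $g \geq 0$, such that $J^{1/p}_t g$ admits no continuous representative on $[0,T]$. This reflects the sharpness of the Hardy--Littlewood theory of Riemann--Liouville fractional integrals at the critical exponent, already discussed in \cite{CarFe2,CarFe3}. To establish it, I would argue by contradiction: if $J^{1/p}_t g$ were continuous for every $g \in L^p$, then the closed graph theorem (together with the boundedness of $J^{1/p}:L^p\to L^p$ and the continuous inclusion $C([0,T])\hookrightarrow L^p$) would yield $\|J^{1/p}g\|_\infty \leq M\|g\|_{L^p}$ for some $M>0$. In particular, for each fixed $t_0 \in (0,T]$ the evaluation $g\mapsto J^{1/p}_{t_0}g$ would be a bounded linear functional on $L^p$; but its integral kernel $(t_0-\cdot)^{1/p-1}/\Gamma(1/p)$ satisfies
\begin{equation*}
\int_0^{t_0}(t_0-s)^{(1/p-1)p'}\,ds = \int_0^{t_0}(t_0-s)^{-1}\,ds = +\infty,
\end{equation*}
since $(1/p-1)p' = -1$, so this kernel fails to belong to $L^{p'}(0,t_0)$, a contradiction. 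Nonnegativity of $g$ is then arranged by replacing $g$ with $g_+$ or $g_-$, since at least one must still have a noncontinuous fractional integral.

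With such $g$ fixed, suppose for contradiction that a solution $\varphi:[0,T]\to\mathbb{R}$ exists for some $\xi \in \mathbb{R}$. By definition, $\varphi \in C([0,T];\mathbb{R})$, $\capto^{1/p}\varphi \in L^p(0,T;\mathbb{R})$, and $\capto^{1/p}\varphi(t)=g(t)$ almost everywhere. Applying $J^{1/p}_t$ and invoking Proposition \ref{202504011515} (whose hypotheses hold since $\varphi\in C([0,T];\mathbb{R})$ and $J^{1-1/p}_t[\varphi-\xi]\in W^{1,p}(0,T;\mathbb{R})\subset W^{1,1}(0,T;\mathbb{R})$), one obtains
\begin{equation*}
\varphi(t) = \xi + J^{1/p}_t g(t), \quad \textrm{for a.e. }t\in[0,T].
\end{equation*}
Since $\varphi$ is continuous, $J^{1/p}_t g$ would then admit the continuous representative $\varphi-\xi$, contradicting the choice of $g$; therefore, no solution can exist.

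The only real obstacle is producing the critical function $g$: this is precisely the sharp endpoint of the Hardy--Littlewood regularity theory, and once its existence is granted---either via the functional-analytic argument sketched above or by appealing directly to the results in \cite{CarFe2}---the rest of the proof is an immediate consequence of the inversion formula of Proposition \ref{202504011515}.
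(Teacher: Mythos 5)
Your proof is correct, and while it follows the same overall contradiction scheme as the paper (exhibit a critical forcing term, invert the Caputo derivative via Proposition \ref{202504011515}, contradict continuity of the solution), it differs genuinely in both ingredients. The paper proves the case $n=1$ by citing Hardy--Littlewood \cite[item (iv) of Section 3.5]{HaLi} for an explicit $\sigma \in L^p(0,T;\mathbb{R})$ with $J^{1/p}_t\sigma$ unbounded, takes $f(x,t)=x+\sigma(t)$, and then needs Theorem \ref{202504011518} (applied to the continuous, hence $L^q$ for all $q$, solution $\phi$) to see that $J^{1/p}_t\phi$ is continuous before isolating $J^{1/p}_t\sigma$. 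Your choice $f(x,t)=g(t)$ is simpler: the inversion gives $\varphi = \xi + J^{1/p}_t g$ outright, with no extra regularity step, and it still satisfies $(C_p^*)$ with $C=0$. Your soft construction of the critical $g$ (closed graph theorem, then failure of the kernel $(t_0-\cdot)^{1/p-1}$ to lie in $L^{p'}(0,t_0)$, $p':=p/(p-1)$, since $(1/p-1)p'=-1$) buys self-containedness where the paper simply cites the classical explicit example; the price is one step you should spell out, namely the identification of the bounded evaluation functional with integration against the Riemann--Liouville kernel at the \emph{fixed} point $t_0$: the integral formula for the continuous representative is a priori only an a.e.\ identity, but for bounded $g$ it holds everywhere (Theorem \ref{202504011518} with large $q$), and then density of bounded functions plus monotone convergence forces $\Vert (t_0-\cdot)^{1/p-1}\Vert_{L^{p'}(0,t_0)} \leq M\,\Gamma(1/p) < \infty$, which is the contradiction. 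Two cosmetic remarks: nonnegativity of $g$ is never needed, since $(C_p^*)$ only requires $\gamma = |g| \in L^p(0,T)$, so the $g_{\pm}$ fix is superfluous; and in the padding argument you should keep the \emph{given} orders $\alpha_j$ rather than resetting them to $1$ --- with $f_j \equiv 0$ one gets $\varphi_j \equiv \xi_j$ for any $\alpha_j \in (0,1]$, so your reduction to the scalar case (which the paper performs only implicitly, proving ``the case $n=1$ for simplicity'') works verbatim for arbitrary prescribed orders.
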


\begin{proof} Let us prove the case $n=1$ for simplicity. First, we recall that Hardy and Littlewood showed in \cite[item (iv) of Section 3.5]{HaLi} that there exists $\sigma \in L^p(0,T;\mathbb{R})$ such that $J_t^{1/p} \sigma(t)$ is unbounded. 

Now, consider the function $f: \mathbb{R} \times [0,T] \to \mathbb{R}$ defined by
$$
f(x,t) = x + \sigma(t).
$$

It is straightforward to verify that $f$ is a Carathéodory function that satisfies condition $(C_p^*)$. 

Next, assume that there exists a solution $\phi(t)$ to the problem \eqref{caratheq}–\eqref{carathin}. By applying $J_t^{1/p}$ to both sides of \eqref{caratheq} and using Proposition \ref{202504011515}, we obtain the following equation:
$$
\phi(t) - \phi(0) - J_t^\alpha \phi(t) = J_t^\alpha \sigma(t),
$$
for all $t \in [0,T]$. Since Theorem \ref{202504011518} guarantees that $J_t^\alpha \phi(t)$ is continuous, it follows that $J_t^\alpha \sigma(t)$ must also be continuous on $[0,T]$ and, consequently, bounded. This leads to a contradiction, as we know that $J_t^{1/p} \sigma(t)$ is unbounded. Therefore, no solution $\phi(t)$ exists for this problem.
\end{proof}

\end{document}